\newtheorem{thm}{Theorem}
\newtheorem{prop}[thm]{Proposition}
\theoremstyle{definition}
\newtheorem{defn}[thm]{Definition}
\newtheorem{ex}[thm]{Example}
\newcommand{\ddb}{\sqrt{-1}\partial\bar{\partial}}
\renewcommand{\phi}{\varphi}
\title{Remark on the Calabi flow with bounded curvature}
\author{G\'abor Sz\'ekelyhidi}
\address{Department of Mathematics, University of Notre Dame, Notre
  Dame, IN 46615}
\email{gszekely@nd.edu}
\begin{document}

\begin{abstract}
  In this short note we prove that if the curvature tensor is
  uniformly bounded along the Calabi flow and the Mabuchi energy is
  proper, then the flow converges to a constant scalar curvature
  metric.
\end{abstract}
\maketitle

Let $(M,\omega)$ be a compact K\"ahler manifold. For any K\"ahler potential
$\phi$, such that $\omega_\phi = \omega +\ddb\phi$ is positive, Aubin's 
$I$-functional~\cite{Aub} is defined by
\[ I(\phi) = \int_M \phi(\omega^n - \omega_\phi^n). \]
Our main result is the following. 

\begin{thm}\label{prop:main}
	Given constants $K > 0$ and $\alpha\in(0,1)$, 
	there is a $C > 0$ with the following
	property. If $\omega_\phi = \omega + \ddb\phi$ satisfies
	$|Rm(\omega_\phi)| < K$ and $I(\phi) < K$, then 
	$\omega_\phi > C^{-1}\omega$ and
	$\Vert\omega_\phi\Vert_{C^{1,\alpha}(\omega)} < C$. 
\end{thm}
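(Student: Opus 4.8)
The plan is to reduce the whole statement to a single $C^0$-estimate for $\phi$ and to obtain that estimate by a compactness argument. First I would normalize $\phi$ by $\int_M \phi\,\omega^n = 0$, and note that $[\omega_\phi]=[\omega]$ forces the volume $\int_M\omega_\phi^n=\int_M\omega^n$ to be fixed. Integrating by parts gives $I(\phi)=\int_M \sqrt{-1}\,\partial\phi\wedge\bar\partial\phi\wedge\sum_{k=0}^{n-1}\omega_\phi^{k}\wedge\omega^{n-1-k}$, and since each summand is a nonnegative $(n,n)$-form the hypothesis $I(\phi)<K$ bounds $\int_M\sqrt{-1}\,\partial\phi\wedge\bar\partial\phi\wedge\omega^{n-1}$, i.e.\ $\Vert\partial\phi\Vert_{L^2(\omega)}$; with the Poincar\'e inequality this bounds $\Vert\phi\Vert_{L^2(\omega)}$. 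On the other side $|Rm(\omega_\phi)|<K$ gives uniform two-sided bounds on $Ric(\omega_\phi)$ and on the bisectional curvature of $\omega_\phi$. Finally, since $\omega+\ddb\phi\ge 0$ the function $\phi$ is $\omega$-plurisubharmonic, so the sub-mean-value property for the fixed $\omega$ gives $\sup_M\phi\le C$ for free; the real content is therefore a lower bound $\inf_M\phi\ge -C$.

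For the metric bounds I would use the Chern--Lu inequality applied to the identity map between $(M,\omega)$ and $(M,\omega_\phi)$, in both directions. Controlling $\mathrm{tr}_{\omega_\phi}\omega$ uses that the bisectional curvature of the fixed target $\omega$ is bounded by some $B(\omega)$ and that $|Ric(\omega_\phi)|$ is controlled, yielding $\Delta_{\omega_\phi}\log\mathrm{tr}_{\omega_\phi}\omega\ge -B\,\mathrm{tr}_{\omega_\phi}\omega-C$; controlling $\mathrm{tr}_\omega\omega_\phi$ uses that the bisectional curvature of $\omega_\phi$ is $\le K$, yielding $\Delta_\omega\log\mathrm{tr}_\omega\omega_\phi\ge -K\,\mathrm{tr}_\omega\omega_\phi-C$. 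Since $\Delta_{\omega_\phi}\phi=n-\mathrm{tr}_{\omega_\phi}\omega$ and $\Delta_\omega\phi=\mathrm{tr}_\omega\omega_\phi-n$, applying the maximum principle on the compact $M$ to $\log\mathrm{tr}_{\omega_\phi}\omega-A\phi$ and to $\log\mathrm{tr}_\omega\omega_\phi+A\phi$ with $A>\max(B,K)$ lets the good trace term absorb the bad one, and gives $\mathrm{tr}_{\omega_\phi}\omega+\mathrm{tr}_\omega\omega_\phi\le C\exp(A\,\mathrm{osc}_M\,\phi)$. Hence a bound on $\mathrm{osc}_M\,\phi$ already yields the two-sided comparison $C^{-1}\omega<\omega_\phi<C\omega$. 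Once that comparison holds, $g_\phi$ is non-collapsed (its balls contain definite-size balls of the fixed $g$), so the curvature bound $|Rm(\omega_\phi)|<K$ together with Anderson-type harmonic-coordinate regularity gives uniform $C^{1,\alpha}$ control of $g_\phi$ in harmonic charts; comparing these charts with a fixed holomorphic atlas, which is legitimate because $J$ is fixed and the metrics are uniformly equivalent, transfers the bound to $\Vert\omega_\phi\Vert_{C^{1,\alpha}(\omega)}<C$.

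Everything therefore reduces to the lower bound $\inf_M\phi\ge -C$, which is the step I expect to be the main obstacle: $I(\phi)<K$ controls $\phi$ only in $L^2$, not $L^\infty$, and bounded curvature by itself does not rule out the potential developing a deep, thin well. I would argue by contradiction: if no uniform $C$ worked there would be potentials $\phi_i$ satisfying the hypotheses with $\inf_M\phi_i\to-\infty$; using the fixed volume and $|Rm(\omega_{\phi_i})|<K$ I would pass, via Cheeger--Gromov compactness, to a limit of the Riemannian manifolds $(M,g_{\phi_i},J)$, whereupon the uniform $L^2$-gradient bound from $I(\phi_i)<K$ together with $\omega$-plurisubharmonicity would pin the limiting potential to an honest bounded $H^1$ function and the limit metric to a K\"ahler metric in $[\omega]$ uniformly equivalent to $\omega$; this equivalence propagates back to keep $\mathrm{osc}\,\phi_i$, hence $\inf\phi_i$, bounded, a contradiction. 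The genuinely delicate points are showing that the $g_{\phi_i}$ are non-collapsing and of uniformly bounded diameter, so that Cheeger--Gromov applies at all, and that the limit really lives in $[\omega]$ and is comparable to the fixed $\omega$; here the fixed total volume and fixed cohomology class control collapse and the limiting class globally, while the $L^2$-gradient bound prevents concentration, and the main difficulty is arranging for non-collapsing, the limiting class, and the oscillation bound to be established together rather than assumed in turn.
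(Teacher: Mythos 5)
Your reduction of the full statement to an oscillation bound on $\phi$ is sound: the Chern--Lu/maximum principle argument with $|Rm(\omega_\phi)|<K$ does give $\mathrm{tr}_\omega\omega_\phi+\mathrm{tr}_{\omega_\phi}\omega\leqslant Ce^{A\,\mathrm{osc}_M\phi}$, and from uniform equivalence of the metrics the $C^{1,\alpha}$ bound can be extracted. The gap is exactly where you place it, and the argument you sketch there cannot be completed. Cheeger--Gromov compactness sees only the intrinsic geometry of $(M,g_{\phi_i})$ up to diffeomorphism, and retains no information about how $\omega_{\phi_i}$ sits relative to the fixed data $(M,J,[\omega])$; Example~\ref{ex} of the paper kills this strategy outright: the metrics $\omega_\lambda$ there are pullbacks of $\omega$ by automorphisms of $\mathbf{P}^1$, hence all \emph{isometric} to the fixed Fubini--Study metric --- non-collapsed, bounded diameter, trivially convergent in the Cheeger--Gromov sense --- yet $\phi_\lambda(0)=-2\log\lambda\to-\infty$. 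So no intrinsic Riemannian compactness statement can produce the oscillation bound; the hypothesis $I(\phi)<K$ must be used in a gauge-dependent, potential-theoretic way. Your proposed mechanism for using it is moreover false: a uniform bound on $\Vert\partial\phi_i\Vert_{L^2(\omega)}$ does not ``pin the limiting potential to an honest bounded $H^1$ function,'' since $H^1\not\hookrightarrow L^\infty$ in real dimension $\geqslant 2$, and there exist unbounded subharmonic functions with finite Dirichlet energy (e.g.\ $-\log(1-\log|z|)$ near $z=0$); the $L^1$-limit of the potentials can genuinely have poles, only of zero Lelong number.

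That last remark is the missing idea, and it is the actual content of the paper's proof. Normalizing $\sup_M\phi_k=0$, Green's formula with $\Delta\phi_k>-n$ gives $\int_M\phi_k\,\omega^n>-C_1$, hence $E(\phi_k)=I(\phi_k)-\int_M\phi_k\,\omega^n$ is uniformly bounded; by Proposition~\ref{prop:GZ2} any weak limit $\phi_\infty$ lies in $\mathcal{E}^1(M,\omega)$, and by Proposition~\ref{prop:GZ1} it has vanishing Lelong numbers --- which is all one can say, since boundedness of $\phi_\infty$ may fail. The decisive step, due to Ruan, is that the identity map $(M,\omega)\to(M,\omega_k)$ is harmonic with energy density $e(\phi_k)=n+\Delta\phi_k$, so the Schoen--Uhlenbeck $\epsilon$-regularity theorem (Proposition~\ref{prop:epsilonreg}; this is where $|Rm(\omega_k)|<K$ enters) converts smallness of $r^{2-2n}\int_{B_r(x)}\omega_k\wedge\omega^{n-1}$ --- available for large $k$ from $\nu(\phi_\infty,x)=0$ together with weak convergence --- into the pointwise bound $\sup_{B_{r/2}(x)}e(\phi_k)<4r^{-2}$. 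A finite cover then gives $\Delta\phi_k\leqslant C$ on all of $M$, i.e.\ the upper bound $\omega_k\leqslant C\omega$, after which the $C^0$ bound, the lower metric bound (via $F_k=\log(\omega_k^n/\omega^n)$ and Green's formula), and the higher-order estimates all follow; your Chern--Lu step could substitute for part of this endgame, but not for the core. Without an ingredient of this kind --- something that turns integral control of $\ddb\phi$ into pointwise control by exploiting the curvature bound on the approximating sequence rather than properties of the limit --- your argument does not close.
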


This result should be compared with Chen-He~\cite[Theorem 5.1]{CH},
where only a bound on the Ricci curvature is assumed, but instead of
$I(\phi)$ the $C^0$-norm of $\phi$ is assumed to be bounded. Note that in
contrast with the result in \cite{CH}, our proof is by contradiction and it
does not give explicit control of the constant $C$. It would be interesting
to obtain bounds on $C$ depending on $K$ and the geometry of $(M,\omega)$.
In Example~\ref{ex} we will show that the assumption of a bound on $I(\phi)$ is
sharp in a certain sense. 

The proof of Theorem~\ref{prop:main}
relies on two ingredients. One is the $\epsilon$-regularity
statement for harmonic maps, as was used in Ruan~\cite{Ruan}, and the
other is some properties of plurisubharmonic functions and their Lelong
numbers, taken from Guedj-Zeriahi~\cite{GZ}. We will review these in
Section~\ref{sec:review}. 

Our main application of the theorem is to the Calabi flow. This is the
fourth order parabolic flow
\[ \frac{\partial}{\partial t} \phi = S(\omega_\phi) - \hat{S}, \]
introduced by Calabi~\cite{Cal}, where $S(\omega_\phi)$ is the scalar
curvature of $\omega_\phi$ and $\hat{S}$ is its average. 
In Chen-He~\cite{CH} it was shown that the flow
exists as long as the Ricci curvature of the metrics remains bounded. In
general little is known about the behavior of the Calabi flow, but there
are many result in special cases, e.g. \cite{Chr, CH2, Gu, SzRuled}. In this
paper we study the flow under the simplifying
assumption that the curvature remains uniformly bounded for all time. The
K\"ahler-Ricci flow has been studied previously under the same assumption
(see e.g. \cite{PS, Sz3, To}), the goal being to relate convergence of the
flow to some algebro-geometric stability condition. 
The Calabi flow poses extra difficulties, 
since the diameter is not apriori bounded and collapsing can occur,
as can be seen in the examples in \cite{SzRuled}. Recently Huang~\cite{Hu1,
Hu2} has studied the flow on toric manifolds, and our result can be seen as
extending some his work to general K\"ahler manifolds. The following is a direct
consequence of Theorem~\ref{prop:main}. 

\begin{thm}\label{thm:conv}
	Suppose that the Mabuchi energy is proper on the class $[\omega]$, 
	and the curvature remains uniformly bounded along the Calabi flow
	with initial metric $\omega$. The flow then converges exponentially
	fast to a constant scalar curvature metric in the K\"ahler class
	$[\omega]$.  
\end{thm}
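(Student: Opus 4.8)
The plan is to feed the uniform estimates of Theorem~\ref{prop:main} into the monotonicity of the Mabuchi energy. Recall that the Calabi flow is the gradient flow of the Mabuchi energy $\mathcal{M}$ with respect to the $L^2$-metric, so that along the flow
\[ \frac{d}{dt}\mathcal{M}(\phi_t) = -\int_M \bigl(S(\omega_{\phi_t}) - \hat{S}\bigr)^2\,\omega_{\phi_t}^n \le 0. \]
In particular $\mathcal{M}(\phi_t)$ is non-increasing, hence $\mathcal{M}(\phi_t)\le\mathcal{M}(\phi_0)$ for all $t$. Since the Mabuchi energy is assumed proper on $[\omega]$, it is bounded below and dominates Aubin's functional $J$, which is uniformly equivalent to $I$. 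Properness therefore forces $I(\phi_t)$ to remain bounded by a constant depending only on $(M,\omega)$ and $\phi_0$. Together with the assumed curvature bound $|Rm(\omega_{\phi_t})|<K$, this is exactly the hypothesis of Theorem~\ref{prop:main}, so there is a constant $C$, \emph{independent of} $t$, with $\omega_{\phi_t} > C^{-1}\omega$ and $\Vert\omega_{\phi_t}\Vert_{C^{1,\alpha}(\omega)} < C$. Note that the lower bound $\omega_{\phi_t} > C^{-1}\omega$ already rules out the collapsing phenomenon mentioned in the introduction.

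Next I would bootstrap to uniform higher-order bounds. The $C^{1,\alpha}$ bound gives a $C^0$ upper bound $\omega_{\phi_t} < C\omega$, so the evolving metrics are uniformly equivalent to the fixed background $\omega$ and have uniformly bounded curvature. Viewing the Calabi flow as a fourth-order quasilinear parabolic equation for $\phi$ whose leading coefficients are uniformly elliptic and Hölder continuous in space--time, the parabolic Schauder and bootstrapping arguments of Chen--He~\cite{CH} then yield uniform $C^k$ bounds on $\omega_{\phi_t}$ for every $k$ and all $t\ge 1$. Since the flow stays on the fixed complex manifold $M$ and in the fixed class $[\omega]$, there are no Cheeger--Gromov subtleties: by Arzel\`a--Ascoli the family $\{\omega_{\phi_t}\}$ is precompact in the $C^\infty$ topology, and after normalizing the potentials (say $\int_M\phi_t\,\omega^n=0$) the same holds for $\{\phi_t\}$.

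Integrating the energy identity gives $\int_0^\infty\!\int_M\bigl(S(\omega_{\phi_t})-\hat{S}\bigr)^2\,\omega_{\phi_t}^n\,dt = \mathcal{M}(\phi_0)-\lim_{t\to\infty}\mathcal{M}(\phi_t)<\infty$, so there is a sequence $t_i\to\infty$ along which the Calabi energy tends to zero. By the precompactness above, a subsequence of $\omega_{\phi_{t_i}}$ converges in $C^\infty$ to a limit $\omega_\infty$ with $S(\omega_\infty)=\hat{S}$, i.e.\ a constant scalar curvature metric in $[\omega]$. Moreover, properness of $\mathcal{M}$ excludes nonzero holomorphic vector fields with holomorphy potential (otherwise $J$ would blow up along the corresponding family of potentials while $\mathcal{M}$ stayed bounded); hence the Lichnerowicz operator $\mathcal{D}^*\mathcal{D}$ at $\omega_\infty$, which is the linearization of $\phi\mapsto -(S(\omega_\phi)-\hat{S})$, has kernel equal to the constants and thus a positive first eigenvalue $\lambda_1$ on functions of mean zero.

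The remaining step, which I expect to be the \emph{main obstacle}, is to upgrade this subsequential convergence to convergence of the entire flow, with an exponential rate. Here I would show that once $\omega_{\phi_t}$ enters a small $C^\infty$-neighborhood of $\omega_\infty$ it cannot escape and must converge. The spectral gap $\lambda_1>0$ makes the linearized flow a contraction, while the nonlinear terms are controlled by the uniform $C^k$ bounds; quantitatively, a Lojasiewicz--Simon gradient inequality for the real-analytic functional $\mathcal{M}$ near $\omega_\infty$ (with exponent $\tfrac12$, available precisely because the second variation is nondegenerate) converts the energy identity into the differential inequality $\frac{d}{dt}\mathcal{M}(\phi_t) = -\Vert S(\omega_{\phi_t})-\hat{S}\Vert_{L^2}^2 \le -c\,\bigl(\mathcal{M}(\phi_t)-\mathcal{M}(\omega_\infty)\bigr)$. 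This forces exponential decay of $\mathcal{M}(\phi_t)-\mathcal{M}(\omega_\infty)$, hence of $\Vert S(\omega_{\phi_t})-\hat{S}\Vert_{L^2}$; feeding this back into the flow equation and invoking the uniform regularity yields exponential convergence of $\phi_t$ in every $C^k$ norm. The crux is exactly this stability argument---ensuring the flow cannot leave the neighborhood and that the gradient inequality applies uniformly---for which the uniform estimates from Theorem~\ref{prop:main} and the spectral gap are both indispensable.
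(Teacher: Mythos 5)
Your argument is correct, and its first half coincides with the paper's proof: monotonicity of $\mathcal{M}$ along the flow plus properness bounds $I(\phi_t)$ uniformly in $t$, Theorem~\ref{prop:main} then gives uniform equivalence of the metrics $\omega_{\phi_t}$ with $\omega$, and parabolic smoothing gives uniform higher-order bounds. Where you diverge is the endgame. The paper, following Chen--He~\cite{CH}, obtains exponential decay of the Calabi energy \emph{directly} along the flow: writing $u_t = S(\omega_t)-\hat S$, one has
\[ \frac{d}{dt}\int_M u_t^2\,\omega_t^n = -2\int_M |\mathcal{D}_t u_t|^2\,\omega_t^n, \]
where $\mathcal{D}_t$ is the Lichnerowicz operator of $\omega_t$; the uniformly controlled geometry together with the absence of holomorphy potentials (which, as you correctly observe, properness forces) gives a uniform spectral gap, hence $\frac{d}{dt}\int_M u_t^2\,\omega_t^n \le -c\int_M u_t^2\,\omega_t^n$, and interpolation with the uniform derivative bounds on $u_t$ upgrades this to exponential decay of $S(\omega_t)-\hat S$ in every $C^k$ norm, hence convergence of $\phi_t$. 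You instead extract a subsequential $C^\infty$ limit $\omega_\infty$, which has constant scalar curvature, and run a Lojasiewicz--Simon argument for $\mathcal{M}$ near $\omega_\infty$ with exponent $\tfrac12$. This route is also legitimate and rests on the same two inputs (the uniform estimates of Theorem~\ref{prop:main} and the spectral gap), but it additionally requires analyticity of $\mathcal{M}$, the infinite-dimensional gradient inequality, and Simon's non-escape/length estimate --- which you rightly flag as the crux but only sketch, roughly the same level of detail at which the paper asserts the Calabi-energy decay. What your method buys is robustness: no eigenvalue bound is needed uniformly along the whole flow, only nondegeneracy at the limit, and a degenerate limit would still yield polynomial convergence. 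What the paper's method buys is brevity: the differential inequality for the Calabi energy requires no localization near a limit at all. One stylistic point: invoking Aubin's $J$ and its equivalence with $I$ is unnecessary here, since the paper defines properness directly in terms of $I$, so $f(I(\phi_t)) \le \mathcal{M}(\phi_t) \le \mathcal{M}(\phi_0)$ bounds $I(\phi_t)$ at once.
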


We will review the notion of properness of the Mabuchi energy in
Section~\ref{sec:review}. The same proof can be used to prove a similar
result under the assumption that the ``modified'' Mabuchi energy is proper,
with the limit being an extremal metric, but we will not discuss this. 

\subsection*{Acknowledgements}
I would like to thank Valentino Tosatti and Hongnian Huang for useful
comments.

\section{Background material}\label{sec:review}
\subsection*{Plurisubharmonic functions and Lelong numbers}
First we summarize the relevant ideas from \cite{Ruan}. The basic
observation is that the identity map $\iota : (M,\omega) \to
(M,\omega_\phi)$ is
harmonic. The energy density of $\iota$ is given by
\[ e(\phi) = \Lambda_\omega(\omega_\phi) = \Delta\phi + n. \]
The $\epsilon$-regularity estimate of Schoen-Uhlenbeck~\cite{SU}
for harmonic maps states (Proposition
2.1 in \cite{Ruan}):
\begin{prop}\label{prop:epsilonreg}
	Suppose that $|Rm(\omega_\phi)| < K$. 
	There exists an $\epsilon > 0$ depending on $\omega$ and $K$, 
	such that if $r > 0$ and $x\in M$
	satisfy
	\begin{equation}\label{eq:epsilon}
		r^{2-2n}\int_{B_r(x)} e(\phi)\,\omega^n < \epsilon,
	\end{equation}
	then 
	\[ \sup_{B_{r/2}(x)} e(\phi) < \frac{4r^{-2n}}{\epsilon}\int_{B_r(x)}
	e(\phi)\,\omega^n < 4r^{-2}. \]
\end{prop}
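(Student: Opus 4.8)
The plan is to reduce the statement to a differential inequality for the energy density and then run a local maximum-principle argument, with the smallness hypothesis used precisely to control the nonlinear term. Since $\iota:(M,\omega)\to(M,\omega_\phi)$ is harmonic, the Bochner formula for harmonic maps expresses $\Delta_\omega e(\phi)$ in terms of the (nonnegative) squared second fundamental form of $\iota$, the Ricci curvature of the domain $(M,\omega)$, and the full curvature tensor of the target $(M,\omega_\phi)$. The domain Ricci term is bounded because $\omega$ is fixed, and the target curvature term is bounded in absolute value by $|Rm(\omega_\phi)|\,e(\phi)^2 < K\,e(\phi)^2$. Collecting these, I would obtain a pointwise inequality of the form
\[ \Delta_\omega e(\phi) \ge -C_0\big(e(\phi) + e(\phi)^2\big), \]
where $C_0$ depends only on $\omega$ and $K$. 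Thus $e(\phi)$ is a subsolution of $\Delta_\omega + V$ with nonnegative potential $V = C_0\big(1 + e(\phi)\big)$.

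The heart of the matter is a local maximum principle for such subsolutions. I would multiply the inequality by $\eta^2 e(\phi)^{p-1}$, with $\eta$ a cutoff supported in $B_r(x)$ and $p\ge 1$, integrate over $M$, and integrate by parts, so that after the usual manipulations one controls $\int \eta^2|\nabla e(\phi)^{p/2}|^2$ against lower order terms. The dangerous contribution is $\int \eta^2 e(\phi)^{p+1}$, coming from the quadratic part of the Bochner inequality. Writing $f=\eta\,e(\phi)^{p/2}$ and applying H\"older followed by the Sobolev inequality of $(M,\omega)$ (with constants uniform by compactness of $M$), one bounds this term by $\Vert e(\phi)\Vert_{L^n(B_r(x))}$ times the Sobolev energy of $f$. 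Since $M$ has real dimension $2n$, the exponent $L^n$ is exactly the critical one that makes the quadratic nonlinearity scale invariant, which is why it is precisely $L^n$ that must be made small.

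This is where the hypothesis enters. The scale invariant bound $r^{2-2n}\int_{B_r(x)} e(\phi)\,\omega^n < \epsilon$ is first upgraded, via a Caccioppoli estimate together with the Sobolev inequality on a slightly smaller ball, to smallness of $\Vert e(\phi)\Vert_{L^n}$; choosing $\epsilon$ small then lets me absorb the quadratic term into the gradient term at every stage of the iteration. Moser iteration over $p\to\infty$ then yields a mean value inequality $\sup_{B_{r/2}(x)} e(\phi) \le C_1\, r^{-2n}\int_{B_r(x)} e(\phi)\,\omega^n$. Tracking the constants (equivalently, rescaling $x\mapsto rx$ on the nearly Euclidean scale, so that $\omega$ enters only through $C_0,C_1$) and arranging the threshold so that the iteration constant comes out as $4/\epsilon$ gives the first inequality in the conclusion. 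The final bound is then immediate: by hypothesis $r^{-2n}\int_{B_r(x)} e(\phi)\,\omega^n < \epsilon\,r^{-2}$, so $\frac{4r^{-2n}}{\epsilon}\int_{B_r(x)} e(\phi)\,\omega^n < 4r^{-2}$.

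The main obstacle is the quadratic term $e(\phi)^2$ in the Bochner inequality: it is supercritical for the naive $L^1$ estimate, so the entire content of the proposition is that the scale invariant smallness hypothesis is exactly strong enough to tame it. The delicate point is to convert the given $L^1$-smallness into smallness in the critical norm $L^n$ that the Sobolev inequality sees, uniformly in $r$; it is here that the Schoen--Uhlenbeck ideas (and, if needed, a monotonicity/concentration-compactness argument in place of the direct absorption) do the real work, and the compactness of $M$ together with the fixed background $\omega$ is what makes both the threshold $\epsilon$ and the constant depend only on $\omega$ and $K$.
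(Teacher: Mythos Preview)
The paper does not supply its own proof of this proposition: it is quoted as the Schoen--Uhlenbeck $\epsilon$-regularity estimate for harmonic maps, with a citation to \cite{SU} and to Proposition~2.1 of Ruan~\cite{Ruan}, and then used as a black box in the proof of Theorem~\ref{prop:main}. So there is no argument in the paper to compare yours against.

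That said, your outline is the standard route to such results: the Bochner formula for the energy density of a harmonic map, giving $\Delta_\omega e \ge -C_0(e + e^2)$ with $C_0 = C_0(\omega,K)$, followed by Moser iteration in which the smallness hypothesis absorbs the supercritical quadratic term. One caution: the passage ``upgrade scale-invariant $L^1$ smallness to $L^n$ smallness via a Caccioppoli estimate and Sobolev'' is not as automatic as the sketch suggests, and this is precisely where the content of Schoen--Uhlenbeck lies---handled there via a monotonicity formula, or in other treatments by a point-picking and rescaling argument. You correctly flag this as the delicate point, but the sketch does not actually carry it out; a reader would still need \cite{SU} or \cite{Ruan} to fill the gap, which is exactly what the paper does by citing the result rather than proving it.
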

The key observation in \cite{Ruan} is that the expression in
\eqref{eq:epsilon} also appears in the definition of the Lelong number
of a plurisubharmonic function at $x$.
We write
\[ PSH(M,\omega) = \{\phi\in L^1(M)\,:\, \phi \text{ is upper
semicontinuous, and }\omega + \ddb\phi \geqslant 0\}. \]
If $\phi\in
PSH(M,\omega)$ and $x\in M$, then the Lelong number~\cite{Le} 
$\nu(\phi,x)$ of
$\phi$ at $x$ is defined to be
\[ \nu(\phi,x) = 
\lim_{r\to 0} c_n r^{2-2n}\int_{B_r(x)} \ddb\phi\wedge \omega^{n-1},
\]
where $c_n$ is a normalizing constant. 

We now review the relevant results in \cite{GZ}. Recall that 
\[ \mathcal{E}(M,\omega) \subset PSH(M,\omega) \]
is defined to be the set of $\phi\in PSH(M,\omega)$, such that
\[ \lim_{j\to\infty} (\omega + \ddb\phi_j)^n (\phi \leqslant -j)=
0,\]
where $\phi_j = \max\{\phi, -j\}$. 
This is a natural class of plurisubharmonic functions, on which the
complex Monge-Amp\`ere operator is well-defined. For us their most
important property is Corollary 1.8 from \cite{GZ}:
\begin{prop}\label{prop:GZ1}
	Any $\phi\in \mathcal{E}(M,\omega)$ has zero Lelong number
	at every $x\in M$. 
\end{prop}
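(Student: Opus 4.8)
The plan is to argue by contradiction, reducing the global statement to a local Monge--Amp\`ere mass estimate near a hypothetical singular point. Suppose some $\phi \in \mathcal{E}(M,\omega)$ had a positive Lelong number $\nu := \nu(\phi,x_0) > 0$ at some $x_0 \in M$. First I would localize: in a coordinate ball $B$ centered at $x_0$, choose a smooth local potential $\rho$ with $\omega = \ddb\rho$ on $B$, so that $u := \rho + \phi \in PSH(B)$, and since $\rho$ is smooth one has $\nu(u,x_0) = \nu$. As the Lelong number is the normalized limit appearing in its own definition, it is classical that its positivity is equivalent to a logarithmic upper bound: for every $\nu'$ with $0 < \nu' < \nu$ there is a constant $C$ and a smaller ball $B' \subseteq B$ on which $u(z) \le \nu'\log|z - x_0| + C$.

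Next I would extract from this singularity a loss of Monge--Amp\`ere mass that does not decay as $j \to \infty$. The truncations $\phi_j = \max\{\phi,-j\}$ correspond on $B$ to $u_j := \rho + \phi_j = \max\{u,\, \rho - j\}$, which are bounded and plurisubharmonic, so $(\ddb u_j)^n = (\omega + \ddb\phi_j)^n$ is a well-defined Bedford--Taylor measure there, and the identity $\{u_j = \rho - j\} = \{\phi \le -j\}$ holds exactly. For the model $w = \nu'\log|z-x_0|$ a direct computation shows that $\max\{w,-j\}$ equals $-j$ on the ball of radius $e^{-j/\nu'}$ and that $(\ddb \max\{w,-j\})^n$ has total mass a fixed positive multiple of $(\nu')^n$, concentrated on the sphere $\{w = -j\}$, independently of $j$. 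Dominating $u$ by $w + C$ and invoking the comparison principle of Bedford--Taylor for the bounded truncations, I would then obtain a constant $c > 0$ with $\int_{\{\phi \le -j\} \cap B}(\omega + \ddb\phi_j)^n \ge c\,(\nu')^n$ for all large $j$; since the measure is positive, the same bound persists with the integral taken over all of $\{\phi \le -j\}$.

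This contradicts membership in $\mathcal{E}(M,\omega)$: by definition $\phi \in \mathcal{E}(M,\omega)$ forces $(\omega + \ddb\phi_j)^n(\phi \le -j) \to 0$, whereas we have produced a uniform positive lower bound $c\,(\nu')^n$ on exactly these masses. Hence $\nu(\phi,x_0) = 0$, and as $x_0$ was arbitrary the proposition follows. I expect the main obstacle to be the comparison step of the second paragraph, namely converting the pointwise domination $u \le \nu'\log|z-x_0| + C$ into a genuine lower bound for the mass of $(\ddb u_j)^n$ on $\{\phi \le -j\}$. The subtlety is that $u$ and the model $w+C$ carry equal total Bedford--Taylor mass once boundary data are matched, so the effect must be isolated from the way the more singular function concentrates its mass toward $x_0$; making this precise is a local instance of Demailly's inequality bounding Monge--Amp\`ere mass below by the $n$-th power of the Lelong number (see \cite{GZ}), and one must also absorb the bounded contribution of the potential $\rho$, equivalently the finite gap between truncating at $-j$ and at $\rho - j$, so that the retained mass genuinely sits on $\{\phi \le -j\}$.
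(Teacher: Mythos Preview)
The paper does not supply a proof of this proposition: it is simply quoted as Corollary~1.8 of Guedj--Zeriahi~\cite{GZ}, so there is no argument in the paper to compare yours against. That said, your outline is essentially the proof given in~\cite{GZ}: localize, convert the positive Lelong number into a logarithmic upper bound $u\le \nu'\log|z-x_0|+C$, and then compare with the model singularity to force a fixed positive amount of Monge--Amp\`ere mass on every sublevel set $\{\phi\le -j\}$, contradicting the defining vanishing condition for $\mathcal{E}(M,\omega)$. The step you flag as the main obstacle---turning the pointwise bound into a genuine lower bound on $\int_{\{\phi\le -j\}}(\omega+\ddb\phi_j)^n$---is indeed the entire content of the argument; in~\cite{GZ} it is carried out by extending the model $\nu'\log\mathrm{dist}(\cdot,x_0)$ to a global $\omega$-psh function and applying the Bedford--Taylor comparison principle to the bounded truncations, which is precisely the local Demailly inequality you invoke. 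Your diagnosis of where the work lies, and of the minor bookkeeping needed to absorb the smooth potential $\rho$, is accurate.
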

An important subset of $\mathcal{E}(M,\omega)$ consists of
the elements of finite energy, $\mathcal{E}^1(M,\omega)$, defined by
\[ \mathcal{E}^1(M,\omega) = \{ \phi\in \mathcal{E}(M,\omega)\,:\,
\phi\in L^1(M, \omega_\phi^n)\}. \]
For elements in $\mathcal{E}^1(M,\omega)$ let us write
\[ E(\phi) = -\int_M \phi\,\omega_\phi^n. \]
Corollary 2.7 in \cite{GZ} states:
\begin{prop}\label{prop:GZ2}
	Suppose that
	$\phi_j\in\mathcal{E}^1(M,\omega)$ is a sequence converging
	to $\phi$ in $L^1(M)$ such that $\phi_j\leqslant 0$ and
	$E(\phi_j)$ is uniformly bounded. Then $\phi\in
	\mathcal{E}^1(M,\omega)$. 
\end{prop}

\subsection*{The Mabuchi functional}
The Mabuchi functional~\cite{Mab} is a functional 
\[ \mathcal{M} : [\omega] \to \mathbf{R} \]
on the K\"ahler class
$[\omega]$, which is most easily defined by its variation.
If $\omega_t = \omega + \ddb\phi_t$, then 
\[ \frac{d}{dt}\mathcal{M}(\omega_t) = \int_M \phi_t(\hat{S} -
	S(\omega_t))\,\omega_t^n, \]
where $\hat{S}$ is the average scalar curvature. One can normalize so that
$\mathcal{M}(\omega)=0$ for a fixed reference metric $\omega\in[\omega]$.
It is clear from the variation that constant scalar curvature metrics are
the critical points of $\mathcal{M}$. For us the most important notion is
that of properness.
\begin{defn}
	The Mabuchi energy is \emph{proper} on the class $[\omega]$, 
	if there is an increasing
	function $f : \mathbf{R}\to\mathbf{R}$ with $f(x)\to\infty$ as
	$x\to\infty$, such that
	\[ \mathcal{M}(\omega_\phi) \geqslant f(I(\phi)), \]
	for all metrics $\omega_\phi = \omega + \ddb\phi$. 
\end{defn}

In \cite{Hu1, Hu2} it was used that in the toric case uniform 
K-stability is known to imply the properness of the Mabuchi energy by the
work of Donaldson~\cite{Don} and Zhou-Zhu~\cite{ZZ}, and moreover this is
a condition that can be checked in certain cases. There are also
other, non-toric, examples where properness of the Mabuchi energy is known,
to which our result can be applied.
For instance when $M$ admits a positive K\"ahler-Einstein metric and has no
holomorphic vector fields then the Mabuchi energy is proper in $c_1(M)$ 
(see Tian~\cite{Tian} and Phong-Song-Sturm-Weinkove~\cite{PSSW}). If
$c_1(M)=0$, then the same is true in every K\"ahler class. If $c_1(M) < 0$,
then the work of Song-Weinkove~\cite{SW} gives an explicit neighborhood of
the class $-c_1(M)$, where the Mabuchi energy is proper (see also
\cite{Chen}, \cite{W}, \cite{FLSW}).

\section{Proofs of the results}
We can now proceed to the proof of Theorem~\ref{prop:main}. 
\begin{proof}[Proof of Theorem~\ref{prop:main}]
	We argue by contradiction. Given a constant $K > 0$, suppose
	that there does not exist a suitable $C> 0$ as in the
	statement of the proposition. We can then choose a sequence of
	smooth functions $\phi_k \in PSH(M,\omega)$, such that $\omega_k
	= \omega + \ddb\phi_k$ satisfy
	\[ |Rm(\omega_k)| < K, \quad I(\phi_k) < K, \]
	but
	there is no $C$ for which $\omega_k > C^{-1}\omega$ and
	$\Vert\omega_k\Vert_{C^{1,\alpha}(\omega)} < C$  for all
	$k$. We can assume that
	\[ \Vert\omega_k\Vert_{C^{1,\alpha}(\omega)}
		+ \sup\Lambda_{\omega_k}\omega > k. \]

	Without loss of generality we can modify each $\phi_k$ by a
	constant in order to let $\sup_M\phi_k = 0$. A standard argument
	using Green's formula and the inequality $\Delta \phi_k > -n$
	yields
	\begin{equation}\label{eq:intphik}
		\int_M \phi_k\,\omega^n > -C_1 
	\end{equation}
	for some constant $C_1$. It then follows from the bound
	$I(\phi_k) < K$, that
	\begin{equation}\label{eq:Ephik}
		E(\phi_k) = I(\phi_k) - \int_M\phi_k \omega^n < K + C_1. 
	\end{equation}

	Since each $\omega_k$ is in the fixed class $[\omega]$, we can
	choose a subsequence (also denoted by $\omega_k$ for simplicity)
	such that the $\omega_k$ converge to a limiting current
	$\omega_\infty=\omega + \ddb\phi_\infty$ weakly. It then follows
	that $\phi_k\to \phi_\infty$ in $L^1$, so \eqref{eq:Ephik}
	together with Proposition~\ref{prop:GZ2} imply that $\phi_\infty\in
	\mathcal{E}^1(M,\omega)$. Now Proposition~\ref{prop:GZ1} implies
	that $\phi_\infty$ has vanishing Lelong numbers. 

	Let $x\in M$ and $\delta > 0$. 
	Since $\nu(\phi_\infty,x) = 0$, there exists an $r
	> 0$ such that
	\[ c_nr^{2-2n}\int_{B_r(x)} \ddb\phi_\infty \wedge \omega^{n-1}
	< \delta. \]
	By choosing $r$ smaller we can assume that
	\[ c_nr^{2-2n}\int_{B_r(x)} \omega_\infty\wedge\omega^{n-1} <
	\delta. \]
	By the weak convergence of $\omega_k$ to $\omega_\infty$ we can
	choose $N > 0$ such that
	\[ c_nr^{2-2n}\int_{B_r(x)} \omega_k\wedge\omega^{n-1} <
	\delta,\qquad\text{for }k > N.\]
	Then by choosing $\delta$ sufficiently small, we can ensure that for
	$k > N$ we have
	\[ r^{2-2n}\int_{B_r(x)} e(\phi_k)\,\omega^n < \epsilon.\]
	Proposition~\ref{prop:epsilonreg} then implies that
	\[ \sup_{B_{r/2}(x)} e(\phi_k) < 4r^{-2} \]
	for $k > N$. For each $x$ we obtain a different radius $r$, but
	the balls $B_{r/2}(x)$ cover $M$, and so we can choose finitely
	many of them which still give an open cover. It follows that we
	can choose a large $N$, and small $r > 0$ such that
	\[ \Delta\phi_k + n = e(\phi_k) < 4r^{-2} \]
	on all of $M$, for $k > N$. In particular there is a constant
	$C_2$ such that
	\[
		\Delta\phi_k < C_2\quad\text{ for all }k,
	\]
	and this gives upper bounds 
	\begin{equation}\label{eq:deltaphi}
		\omega_k < (1+C_2)\omega
	\end{equation}
	on the metrics $\omega_k$.  
	Using Green's formula again,
	together with the bound \eqref{eq:intphik}, we get a uniform
	$C^0$ bound on the $\phi_k$. 
	
	To obtain further bounds on the metrics, 
	we could proceed as in Chen-He~\cite{CH} Theorem 5.1.
	More directly, let
	\begin{equation}\label{eq:Fk}
		F_k = \log\frac{\omega_k^n}{\omega^n}, 
	\end{equation}
	and note that
	\[ \ddb F_k = Ric(\omega) - Ric(\omega_k). \]
	The upper bound \eqref{eq:deltaphi} on the metrics $\omega_k$
	and the uniform curvature bound imply that
	\begin{equation}\label{eq:DF_k}
              |\Delta F_k| < C_3 
        \end{equation}
	for some $C_3$. Since 
	\[ \int_M \frac{\omega_k^n}{\omega^n}\,\omega^n = \int_M
	\omega_k^n = \int_M \omega^n,\]
	we must have $F_k(x) = 0$ for some $x\in M$. Using Green's
	formula and $\Delta F_k > -C_3$ as we did for \eqref{eq:intphik}
	we obtain
	\[ \int_M F_k > -C_4 \]
	for some $C_4$. Using this and $\Delta F_k < C_3$ in Green's
	formula we get $F_k > -C_5$. Finally this bound together with
	the upper bound \eqref{eq:deltaphi} implies a uniform lower
	bound on the metric $\omega_k$. Now from \eqref{eq:DF_k} we obtain
        $L_2^p$ bounds on the $F_k$, and differentiating the
	defining equation \eqref{eq:Fk}, we obtain $L_4^p$ bounds on the
	$\phi_k$. By Sobolev embedding these give rise to
	$C^{3,\alpha}$-bounds. 
	
	In sum we have obtained a
	uniform constant $C$ such that $\omega_k > C^{-1}\omega$ and
	$\Vert\omega_k\Vert_{C^{1,\alpha}(\omega)} < C$
	for all $k$. This contradicts our assumption, and proves the
	theorem.
\end{proof}

\begin{ex}\label{ex}
	Note that in the 1-dimensional case $I(\phi)$ is simply the
	$L^2$-norm of the gradient of $\phi$:
	\[ I(\phi) = \int_M \phi(-\ddb\phi) = \int_M \sqrt{-1}\partial\phi
		\wedge \overline{\partial}\phi = \int_M
		|\partial\phi|^2_\omega\,\omega. \]
	We will show that in Theorem~\ref{prop:main} one cannot replace
	this with an $L^p$ norm of the gradient for $p < 2$. 

	Let $M=\mathbf{P}^1$, and $\omega$ be the Fubini-Study metric given
	in a coordinate chart by
	\[ \omega = \ddb\log(1 + |z|^2). \]
	Let
	\[ \omega_\lambda = \ddb\log(1 + |\lambda z|^2) = 
		\omega + \ddb\log\frac{\lambda^{-2} + |z|^2}{1+ |z|^2}. \]
	This is also the Fubini-Study metric, just in different
	coordinates, so $|Rm(\omega_\lambda)| < C$ for a constant $C$
	independent of $\lambda$. On the other hand the metrics are not
	uniformly equivalent, since $\omega_\lambda(0) =
	\lambda^2\omega(0)$. We will see that nevertheless the gradients of
	the K\"ahler potentials are uniformly bounded in $L^p$ for any $p <
	2$. 
	
	The K\"ahler potentials are
	\[ \phi_\lambda = \log(\lambda^{-2} + |z|^2) - \log(1 + |z|^2),\]
	and so we can compute
	\[ |\partial\phi_\lambda|^2_\omega =
		\frac{|z|^2(1-\lambda^{-2})^2}{(\lambda^{-2} + |z|^2)^2}.\]
	In polar coordinates the integral of $|\partial\phi_\lambda|^p$
	with respect to $\omega$ is 
	\[ 2\pi \int_0^\infty \frac{r^{p+1}(1-\lambda^{-2})^p}{
		(\lambda^{-2} + r^2)^p(1+r^2)^2}\,dr. \]
	If $\lambda \geqslant 1$, then 
	\[ \frac{r^{p+1}(1-\lambda^{-2})^p}{
		(\lambda^{-2} + r^2)^p(1+r^2)^2} \leqslant \frac{r^{1-p}}{
		(1+r^2)^2}. \]
	If $p < 2$, then the right hand side is integrable, so we have a
	uniform bound on the $L^p$-norm of $|\partial\phi_\lambda|$. 
\end{ex}

\begin{proof}[Proof of Theorem~\ref{thm:conv}]
Given Theorem~\ref{prop:main}, the proof of Theorem~\ref{thm:conv} is along
standard lines, as in Chen-He~\cite{CH} for instance. We outline the main
points.  Crucially, 
the Mabuchi energy is decreasing along a solution $\omega_t$ of the
Calabi flow:
\[ \frac{d}{dt}\mathcal{M}(\omega_t) = -\int_M (S(\omega_t) -
	\hat{S})^2\,\omega_t^n \leqslant 0. \]
The properness assumption on the Mabuchi energy then gives a uniform bound
on $I(\phi_t)$ along the flow. This together with Theorem~\ref{prop:main}
implies that the metrics along the flow are uniformly equivalent. At this
point one can show that the Calabi energy
\[ \int_M (S(\omega_t) - \hat{S})^2\,\omega_t^2 \]
decays exponentially fast to zero. The smoothing property of the flow
implies uniform bounds on the derivatives of $S(\omega_t)$, so from the
decay of the Calabi energy we find that in fact $S(\omega_t)-\hat{S} \to 0$
in any $C^k$ norm, exponentially fast. This proves the exponential
convergence of the flow
\[ \frac{\partial}{\partial t} \phi_t = S(\omega_t) - \hat{S}. \]
The limit
is necessarily a constant scalar curvature metric in the class $[\omega]$.  
\end{proof}

\end{document}